\journal{Statistics \& Probability Letters}
\newcommand{\eps}{\varepsilon}
\newcommand{\E}{\mathsf E}
\newcommand{\Prob}{\mathsf P}
\newcommand{\R}{\mathbb R}
\DeclareMathOperator{\sign}{sign}
\newtheorem{theorem}{Theorem}[section]
\newtheorem{proposition}[theorem]{Proposition}
\newtheorem{corollary}[theorem]{Corollary}
\theoremstyle{definition} \newtheorem{definition}[theorem]{Definition}
\theoremstyle{remark}
\newtheorem{remark}[theorem]{Remark}
\begin{document}

\begin{frontmatter}

\title{Multifractional Poisson process, multistable subordinator and related limit theorems}

\author[mainaddress]{Ilya Molchanov}
\ead{ilya.molchanov@stat.unibe.ch}

\author[mainaddress,secondaryaddress]{Kostiantyn Ralchenko\corref{mycorrespondingauthor}}
\cortext[mycorrespondingauthor]{Corresponding author}
\ead{k.ralchenko@gmail.com}

\address[mainaddress]{University of Bern,
 Institute of Mathematical Statistics and Actuarial Science,
 Sidlerstrasse 5,
 CH-3012 Bern,
 Switzerland}

\address[secondaryaddress]{Taras Shevchenko National University of Kyiv,
 Department of Probability Theory, Statistics and Actuarial Mathematics,
 Volodymyrska 64/13,
 01601 Kyiv,
 Ukraine}

\begin{abstract}
We introduce a multistable subordinator, which generalizes the stable subordinator to the case of time-varying stability index.
This enables us to define a multifractional Poisson process.
We study properties of these processes and establish the convergence of a continuous-time random walk to the multifractional Poisson process.
\end{abstract}

\begin{keyword}
multistable process \sep multifractional process \sep stable subordinator \sep fractional Poisson process \sep continuous-time random walk
\MSC[2010] 60G52 \sep 60G22
\end{keyword}

\end{frontmatter}

\section{Introduction}
The multistable processes are generalizations of well-known stable processes.
They are locally stable, but the stability index may evolve over time.
Various definitions of such processes can be found in~\cite{FalconerLiu12,Falconer_LeGuevel_LevyVehel_2009,Falconer_LevyVehel_2009,LeGuevel_LevyVehel_2012,LeGuevelLevyVehelLiu13}.

The literature contains several definitions of the fractional Poisson process, see~\cite{Beghin_Orsingher09,Laskin03,Mainardi_Gorenflo_Scalas04,RepinSaichev_2000,Uchaikin_Cahoy_Sibatov08}.
\cite{MeerschaertNaneVellaisamy11} suggested to define a fractional Poisson process as $N(E(t))$, for a Poisson process $N(t)$ and $E(t)$ being the right-continuous inverse of the standard $\beta$-stable subordinator $D(t)$, independent of $N(t)$.

In this paper, we construct a multifractional Poisson process using the multistable subordinator instead of $D(t)$.
This process has non-stationary increments, but it retains some important properties of the usual $\beta$-stable subordinator such as the stochastic continuity, the independence of the increments and the strictly increasing sample paths.
We establish that the multistable subordinator is a weak limit of the sums
$\sum_{k\le nt}b_{nk}^{-1}J_{nk}$
where
$\{J_{nk},n\ge1,k\ge1\}$
are independent random variables with regularly varying tails and
$\{b_{nk},n\ge1,k\ge1\}$
are some normalizing constants.
Moreover, we prove that the corresponding right-continuous inverse converges to the right-continuous inverse of the multistable subordinator.
This makes possible to construct continuous-time random walks, which converge to the multifractional Poisson process.
These results generalize
\cite[Theorem~2.5]{MeerschaertNaneVellaisamy11} and \cite[Theorem~3.2]{MeerschaertScheffler04} to the multistable case.

The paper is organized as follows.
In Section~\ref{sec:2} we introduce multistable subordinator, its inverse and multifractional Poisson process, and investigate their properties.
In Section~\ref{sec:3} we prove various limit theorems for these processes.

\section{Multistable subordinator and multifractional Poisson process}\label{sec:2}
\subsection{Basic definitions}
Let $\beta\colon\R_+\to(0,1)$ be a continuous function, $\R_+=[0,\infty)$.
Denote
\[
\beta^*=\sup_t\beta(t),\quad
\beta_*=\inf_t\beta(t).
\]
Consider a Poisson point process $\Pi=\{(t_i,x_i)\}$ on $\R_+\times(0,\infty)$ with intensity measure
\[
\nu(dt,dx)=\beta(t)x^{-\beta(t)-1} dt\,dx.
\]
Define
\[
D(t)=\sum_{\substack{t_i\le t\\(t_i,x_i)\in\Pi}}x_i,
\quad t\ge0.
\]
Since
\begin{equation}\label{eq:exist}
\int_{[0,t]\times(0,\infty)}\min(1,x)\nu(ds,dx)
=\int_0^t\frac{ds}{1-\beta(s)},
\end{equation}
the process $D(t)$ is well defined, if the function
$(1-\beta(s))^{-1}$ is integrable.  In particular, this condition
holds if $\beta^*<1$.  It follows from \cite{FiszVaradarajan} that the
distribution of $D(t)$ is absolutely continuous for all $t>0$.

The Laplace transform of $D(t)$ is equal to
\begin{equation}\label{eq:Laplace}
\begin{split}
\E\exp\{-\theta D(t)\}
&=\exp\left\{\int_0^t\int_0^\infty\left(e^{-\theta x}-1\right)\beta(s)x^{-\beta(s)-1}dx\,ds\right\}\\
&=\exp\left\{-\int_0^t\Gamma(1-\beta(s))\theta^{\beta(s)}ds\right\},
\end{split}
\end{equation}
see, for instance, \cite[Section 3.2]{Kingman93}.
If $\beta(t)=\beta$ is a constant,
\[
\E\exp\{-\theta D(t)\}
=\exp\left\{-\Gamma(1-\beta)t\theta^{\beta}\right\},
\]
is the Laplace transform of the $\beta$-stable subordinator.
We will call the process $D(t)$ the \emph{multistable subordinator} with index $\beta(t)$, $t\ge0$.

\begin{remark}
It is not hard to see that $D(t)$ can be represented as a sum over the stationary Poisson point process $\Pi'$ on $\R_+\times(0,\infty)$ (with the Lebesgue intensity measure),
namely
\[
D(t)\overset{d}{=}\sum_{\substack{t_i\le t\\(t_i,x_i)\in\Pi'}}x_i^{-1/\beta(t_i)}.
\]
\end{remark}

\begin{remark}
\cite{LeGuevelLevyVehelLiu13} study the so-called \emph{independent increments multistable L\'evy motion}.
It admits the following representation \cite[Proposition 1]{LeGuevelLevyVehelLiu13}:
\[
L(t)=\sum_{\substack{t_i\le t\\(t_i,x_i)\in\Pi''}}C_{\beta(t_i)}^{1/\beta(t_i)}x_i^{<-1/\beta(t_i)>}, \quad t\in\R_+,
\]
where
$\Pi''$ is a stationary Poisson point process on $\R_+\times\R$,
$C_\beta=\Gamma(1-\beta)\cos\frac{\beta\pi}{2}$
and
$x_i^{<-1/\beta(t_i)>}=\sign(x_i)|x_i|^{-1/\beta(t_i)}$.
We construct a multistable process using a similar approach, but we avoid using factor $C_\beta$ and consider only positive $x$'s in order to obtain a process with positive and strictly increasing sample paths. Note that the sample paths of $L(t)$ are piecewise-constant functions, see \citep{LeGuevelLevyVehelLiu13}.
\end{remark}

Let $E(r)$ be the right-continuous inverse of $D(t)$, i.\,e.
\[
E(r)=\inf\{t\ge0:D(t)\ge r\}.
\]
Furthermore, let $N(t)$ be a homogeneous Poisson process with intensity $\lambda$, independent of $\Pi$.
\begin{definition}
We call the process $X(t)=N(E(t))$, $t\ge0$, \emph{multifractional Poisson process} with index $\beta(t)$, $t\ge0$.
\end{definition}

\subsection{Properties of the multistable subordinator}
In this subsection we will prove some useful properties of the processes $D(t)$ and $E(t)$.

\begin{proposition}
Assume that $\beta^*<1$.
Then the process $D$ is continuous in probability.
Moreover, for any $\eps>0$ there exists a constant $C_\eps>0$ such that
\begin{equation}\label{eq:up_bound}
\Prob(D(t+h)-D(t)>\eps)\le C_\eps h
\end{equation}
for all $t,h\in\R_+$.
\end{proposition}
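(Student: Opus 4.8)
The plan is to establish both claims simultaneously by bounding the probability in~\eqref{eq:up_bound}, since stochastic continuity follows immediately once we show $\Prob(D(t+h)-D(t)>\eps)\to 0$ as $h\to 0$ (indeed the linear bound in $h$ gives this at once). The natural route is to exploit the Laplace transform~\eqref{eq:Laplace} via a Markov-type (exponential Chebyshev) inequality, but a cleaner and more direct approach uses the Poisson structure of $\Pi$ together with the explicit intensity computation in~\eqref{eq:exist}. First I would observe that the increment $D(t+h)-D(t)$ is the sum of the jumps $x_i$ over points of $\Pi$ falling in the strip $(t,t+h]\times(0,\infty)$, and that this increment is itself an infinitely divisible (compound-Poisson-like) random variable whose law depends only on the restriction of $\nu$ to that strip.

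The key estimate is to split the jumps at a threshold: write $D(t+h)-D(t)=S_{\text{small}}+S_{\text{large}}$, where $S_{\text{large}}$ collects jumps $x_i>\eps$ and $S_{\text{small}}$ collects jumps $x_i\le\eps$. For the large-jump part, the event $\{S_{\text{large}}>0\}$ happens exactly when $\Pi$ has at least one point in $(t,t+h]\times(\eps,\infty)$; since $\Pi$ is Poisson, $\Prob(S_{\text{large}}>0)\le \E[\text{number of such points}] = \int_t^{t+h}\int_\eps^\infty \beta(s)x^{-\beta(s)-1}\,dx\,ds = \int_t^{t+h}\eps^{-\beta(s)}\,ds \le h\,\max(\eps^{-\beta^*},\eps^{-\beta_*})$, using $\beta^*<1$ to keep the exponent bounded. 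For the small-jump contribution I would apply Markov's inequality together with the mean of the small-jump sum, which by the Campbell formula equals $\int_t^{t+h}\int_0^\eps x\,\beta(s)x^{-\beta(s)-1}\,dx\,ds=\int_t^{t+h}\frac{\eps^{1-\beta(s)}}{1-\beta(s)}\,ds$. The assumption $\beta^*<1$ ensures $(1-\beta(s))^{-1}$ is bounded, so this mean is at most $\const\cdot h$; hence $\Prob(S_{\text{small}}>\eps/2)\le \frac{2}{\eps}\int_t^{t+h}\frac{\eps^{1-\beta(s)}}{1-\beta(s)}\,ds\le \const\cdot h$. Combining via $\Prob(D(t+h)-D(t)>\eps)\le\Prob(S_{\text{large}}>0)+\Prob(S_{\text{small}}>\eps/2)$ yields~\eqref{eq:up_bound} with a constant $C_\eps$ depending only on $\eps$, $\beta_*$ and $\beta^*$, uniformly in $t$.

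The main obstacle I anticipate is keeping all bounds genuinely uniform in $t$, since $\beta(s)$ varies with $s$ and the factors $\eps^{-\beta(s)}$ and $\eps^{1-\beta(s)}$ behave differently depending on whether $\eps\lessgtr 1$. The resolution is exactly the hypothesis $\beta^*<1$ (and $\beta_*>0$), which confines $\beta(s)$ to a compact subinterval of $(0,1)$, so that all the $s$-dependent integrands are bounded above by constants independent of $t$; this is what converts the integrals over $(t,t+h]$ into clean multiples of $h$. If one instead preferred the Laplace-transform route, the same uniformity issue would surface through the term $\Gamma(1-\beta(s))\theta^{\beta(s)}$, and one would again invoke $\beta^*<1$ to bound $\Gamma(1-\beta(s))$ and control the exponent uniformly; but the Poisson-splitting argument above is more transparent and avoids optimizing over $\theta$.
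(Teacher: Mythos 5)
Your proof is correct and follows essentially the same route as the paper's: split the jumps of the increment at a fixed threshold (the paper uses $1$, you use $\eps$), bound the large-jump contribution by the probability that $\Pi$ has at least one point in the corresponding strip, and bound the small-jump contribution via Markov's inequality and Campbell's formula, with $\beta^*<1$ supplying the uniformity in $t$. The only blemishes are cosmetic: your Campbell computation drops a factor $\beta(s)$ (the inner integral equals $\beta(s)\eps^{1-\beta(s)}/(1-\beta(s))$, which only strengthens your bound since $\beta(s)<1$), and the condition $\beta_*>0$ you invoke is neither assumed in the proposition nor needed, because $\eps^{-\beta(s)}\le\max\left(1,\eps^{-\beta^*}\right)$ already holds.
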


\begin{proof}
We have
\begin{multline}\label{eq:ineq1}
\Prob(D(t+h)-D(t)>\eps)\\
\le
\Prob\left(\sum_{\substack{t<t_i\le t+h, x_i\le1\\(t_i,x_i)\in\Pi}}x_i>\frac\eps2\right)
+\Prob\left(\sum_{\substack{t<t_i\le t+h, x_i>1\\(t_i,x_i)\in\Pi}}x_i>\frac\eps2\right).
\end{multline}
By Markov's inequality, we obtain
\begin{multline*}
\Prob\left(\sum_{\substack{t<t_i\le t+h, x_i\le1\\(t_i,x_i)\in\Pi}}x_i>\frac\eps2\right)
\le\frac2\eps\E\sum_{\substack{t<t_i\le t+h, x_i\le1\\(t_i,x_i)\in\Pi}}x_i\\
=\frac2\eps\int_t^{t+h}\int_0^1\beta(s)x^{-\beta(s)}\,dx\,ds
=\frac2\eps\int_t^{t+h}\frac{\beta(s)}{1-\beta(s)}\,ds
\le\frac{2\beta^*}{\eps(1-\beta^*)}h.
\end{multline*}
The inequality under the ultimate probability in~\eqref{eq:ineq1}
implies that the Poisson process $\Pi$
has at least one point in
$(t,t+h]\times(1,\infty)$.
Therefore,
\begin{multline*}
\Prob\left(\sum_{\substack{t<t_i\le t+h, x_i>1\\(t_i,x_i)\in\Pi}}x_i>\frac\eps2\right)\\
\le1-\exp\left\{-\int_{t}^{t+h}\!\!\int_1^\infty
\beta(s)x^{-\beta(s)-1}ds\,dx\right\}
=1-e^{-h}\le h.
\end{multline*}
Thus, \eqref{eq:up_bound} holds with
$C_\eps=1+\frac{2\beta^*}{\eps(1-\beta^*)}$.
\end{proof}

\begin{proposition}
The process $D(t)$, $t\ge0$ has independent increments.
\end{proposition}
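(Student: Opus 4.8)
The plan is to reduce the statement to the fundamental independence property of Poisson point processes, namely that the restrictions of $\Pi$ to disjoint Borel subsets of $\R_+\times(0,\infty)$ are mutually independent. Fixing an arbitrary finite collection of time points $0\le t_0<t_1<\dots<t_n$, I would first express each increment as a sum over the points of $\Pi$ falling in a vertical strip: for $k=1,\dots,n$,
\[
D(t_k)-D(t_{k-1})=\sum_{\substack{t_{k-1}<t_i\le t_k\\(t_i,x_i)\in\Pi}}x_i,
\]
so that the $k$-th increment is a measurable functional of the restriction $\Pi|_{S_k}$ of the point process to the strip $S_k=(t_{k-1},t_k]\times(0,\infty)$.

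The key observation is then that the strips $S_1,\dots,S_n$ are pairwise disjoint subsets of $\R_+\times(0,\infty)$. Consequently the restricted processes $\Pi|_{S_1},\dots,\Pi|_{S_n}$ are independent by the independence property of Poisson point processes (see, e.g., \cite{Kingman93}), and since each increment is a deterministic function of the corresponding restriction, the random variables $D(t_1)-D(t_0),\dots,D(t_n)-D(t_{n-1})$ are independent. As the collection of time points was arbitrary, this establishes that $D$ has independent increments.

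The only point requiring care is that each increment be a well-defined, almost surely finite, genuinely measurable functional of $\Pi|_{S_k}$; this follows from the existence condition guaranteeing convergence of the defining sum, which is exactly the integrability of $(1-\beta(s))^{-1}$ recorded in~\eqref{eq:exist} applied on each strip. I do not expect a substantial obstacle here: the argument is essentially the standard disjointness-implies-independence principle for Poisson processes, and the main thing to verify is merely that writing each increment as a sum restricted to $S_k$ is legitimate.
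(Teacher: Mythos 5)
Your argument is correct and is exactly the paper's (one-line) proof spelled out in full: the paper simply asserts that the result follows from the Poisson property of $\Pi$, meaning precisely the independence of the restrictions of $\Pi$ to the disjoint strips $(t_{k-1},t_k]\times(0,\infty)$ that you make explicit. The additional care you take about well-definedness via~\eqref{eq:exist} is a reasonable refinement but introduces no new idea.
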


\begin{proof}
The result follows from the Poisson property of $\Pi$.
\end{proof}

\begin{proposition}\label{prop:D-increasing}
The sample paths of $D$ are a.\,s.\ strictly increasing.
\end{proposition}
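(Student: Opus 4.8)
The plan is to exploit that $D$ is by construction a sum of strictly positive atoms, so it is automatically nondecreasing; the only thing left to establish is that it never stays constant on a nondegenerate interval. The key observation is that the increment $D(b)-D(a)=\sum_{a<t_i\le b}x_i$ is a sum over all points of $\Pi$ falling in the vertical strip $(a,b]\times(0,\infty)$, and each such point contributes a strictly positive amount. Hence $D(b)>D(a)$ as soon as $\Pi$ has at least one point in that strip.

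First I would estimate the expected number of points of $\Pi$ in $(a,b]\times(0,\infty)$, or even just in the sub-strip $(a,b]\times(0,1)$. By the intensity formula,
\[
\nu\big((a,b]\times(0,1)\big)=\int_a^b\int_0^1\beta(s)x^{-\beta(s)-1}\,dx\,ds=\infty,
\]
because $\int_0^1 x^{-\beta(s)-1}\,dx=\infty$ for every $s$, recalling that $\beta(s)\in(0,1)$. Since the number of points of a Poisson process in a Borel set is Poisson-distributed with mean equal to the measure of that set, and a Poisson variable with infinite mean equals $+\infty$ almost surely, it follows that for each fixed pair $a<b$ the strip contains infinitely many points a.s., in particular at least one. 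Therefore $D(b)-D(a)>0$ a.s.

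The main point requiring care is to upgrade this ``for each fixed pair'' statement into ``almost surely for all pairs simultaneously'', i.e.\ a.s.\ strict monotonicity of the entire trajectory. Here I would combine the monotonicity of $D$ with a countable intersection argument: set $A_{q_1,q_2}=\{D(q_2)>D(q_1)\}$ for rationals $q_1<q_2$; each has probability one, so their countable intersection $A=\bigcap_{q_1<q_2}A_{q_1,q_2}$ also has probability one. On the event $A$, suppose that $D$ were constant on some nondegenerate interval $[a,b]$; choosing rationals $a<q_1<q_2<b$ and using monotonicity would force $D(q_1)=D(q_2)$, contradicting $A$. Hence on $A$ the sample paths are strictly increasing, which proves the claim. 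I expect this final reduction to rational pairs to be the only genuinely delicate step, the remainder being a direct consequence of the infinite mass that the Lévy measure places near $x=0$.
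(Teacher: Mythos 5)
Your proof is correct and takes essentially the same route as the paper's: show that for each fixed pair $a<b$ the strip $(a,b]\times(0,\infty)$ almost surely contains a point of $\Pi$ (you make explicit, via the infinite mass that $\nu$ places near $x=0$, why this probability is one, a step the paper leaves implicit), and then upgrade to all intervals simultaneously by the rational-endpoints and monotonicity argument.
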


\begin{proof}
For all $t\ge0$ and $h>0$, the event
$\{D(t+h)=D(t)\}$ means that the Poisson process $\Pi$ has no points in $(t,t+h]\times(0,\infty)$.
Therefore,
$\Prob(D(t+h)=D(t))=0$.
Then $D$ is strictly increasing by considering rational $t$ and $h$.
\end{proof}

\begin{corollary}\label{cor:E-cont}
The sample paths of $E$ are a.\,s.\ continuous and non-decreasing.
\end{corollary}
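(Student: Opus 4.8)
The plan is to reduce the statement to the standard duality between a non-decreasing càdlàg function and its right-continuous inverse, and then to use the strict monotonicity of $D$ from Proposition~\ref{prop:D-increasing} to promote this inverse from merely non-decreasing to continuous. That $E$ is non-decreasing is immediate, since $r\mapsto\{t\ge0:D(t)\ge r\}$ is a decreasing family of sets and $E(r)$ is its infimum; so the entire content of the corollary is the continuity. The one tool I would set up first is the equivalence
\[
E(r)\le t \iff D(t)\ge r,
\]
which holds because $D$ is non-decreasing and right-continuous: right-continuity yields $D(E(r))\ge r$, so $\{t:D(t)\ge r\}$ is exactly the closed half-line $[E(r),\infty)$.

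With this equivalence in hand I would treat the two one-sided limits separately. For left-continuity, take $r_n\uparrow r$ and let $M=\lim E(r_n)\le E(r)$; were $M<E(r)$, any $t\in(M,E(r))$ would satisfy both $D(t)\ge r_n$ for all $n$ (because $t\ge E(r_n)$) and $D(t)<r$ (because $t<E(r)$), and letting $n\to\infty$ would force $D(t)\ge r$, a contradiction, so $M=E(r)$. This requires no probabilistic input. Right-continuity is where the strict monotonicity enters: taking $r_n\downarrow r$ and $L=\lim E(r_n)\ge E(r)$, I would show that $L>E(r)$ forces $D\equiv r$ on the whole interval $(E(r),L)$, since every such $t$ gives $D(t)\le r$ from $E(r_n)>t$ together with $D(t)\ge r$ from $t>E(r)$.

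The main obstacle, and the only place probability is used, is ruling out such a flat interval. Abstractly, jumps of the inverse correspond exactly to intervals of constancy of $D$, so continuity of $E$ is equivalent to $D$ having no flat pieces. Proposition~\ref{prop:D-increasing} supplies this only after passing to rational endpoints, so I would finish by observing that the offending interval $(E(r),L)$ has positive length and hence contains rationals $q_1<q_2$ with $D(q_1)=D(q_2)$; this lies in the null event excluded in Proposition~\ref{prop:D-increasing}, so off a single null set $E$ is continuous at every level, completing the argument.
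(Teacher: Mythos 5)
Your proof is correct and follows the same route the paper intends: the paper's one-line proof simply invokes the definition of $E$ together with Proposition~\ref{prop:D-increasing}, and your argument is precisely the full elaboration of that reduction (jumps of the inverse correspond to flat intervals of $D$, which the a.s.\ strict monotonicity excludes after passing to rational endpoints). No gaps; you have just written out the details the authors left implicit.
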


\begin{proof}
The statement follows directly from the definition of $E$ and Proposition~\ref{prop:D-increasing}.
\end{proof}

\section{Limit theorems}\label{sec:3}
\subsection{Convergence of point processes in the scheme of series}
Let $\{J_{nk},n\ge1,k\ge1\}$ be non-negative independent random variables such that
\begin{equation}\label{eq:J_nk}
\Prob(J_{nk}>t)=t^{-\beta(k/n)}L\left(t^{\beta(k/n)/\beta^*}\right)
\end{equation}
for some slowly varying function $L(t)$, i.\,e.
\begin{equation}\label{eq:slow_var}
L(\lambda t)/L(t)\to1\quad
\text{as } t\to\infty \quad \text{for any } \lambda>0.
\end{equation}
Define the sequences of normalizing constants
$\{a_n,n\ge1\}$ and $\{b_{nk},n\ge1,k\ge1\}$
by
\begin{gather}
a_n^{-\beta^*}L(a_n)=\frac1n,
\label{eq:b_n*}\\
b_{nk}=a_n^{\beta^*/\beta(k/n)}.
\label{eq:b_nk}
\end{gather}

Let $\delta_x$ be the delta-measure concentrated at $x$.

\begin{theorem}
The point process
$N_n=\sum_{k=1}^\infty\delta_{(kn^{-1},b_{nk}^{-1}J_{nk})}$
weakly converges to the Poisson point process $\Pi$.
\end{theorem}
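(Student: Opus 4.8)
The plan is to apply the classical criterion for weak convergence of a null array of independent point processes to a Poisson point process (in the spirit of Kallenberg's theorem; see also Resnick's monograph). Observe that $N_n=\sum_{k\ge1}N_{nk}$ with $N_{nk}=\delta_{(kn^{-1},b_{nk}^{-1}J_{nk})}$, where the summands are independent single-point processes. Since each $N_{nk}$ charges any set with at most one point, the multiplicity condition is automatic, and convergence $N_n\Rightarrow\Pi$ reduces to checking, for every rectangle $B=(a,b]\times(c,d]$ with $0\le a<b$ and $0<c<d$, the two conditions
\[
\E N_n(B)=\sum_{k\ge1}\Prob\bigl((kn^{-1},b_{nk}^{-1}J_{nk})\in B\bigr)\to\nu(B),
\qquad
\max_{k\ge1}\Prob\bigl(N_{nk}(B)>0\bigr)\to0 .
\]
Such rectangles, being bounded away from $x=0$, are relatively compact in $\R_+\times(0,\infty)$ and form a dissecting convergence-determining class for vague convergence, so verifying the two conditions on them suffices.

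First I would compute the key tail. Fix $x>0$ and substitute definition~\eqref{eq:J_nk}; using $b_{nk}^{\beta(k/n)}=a_n^{\beta^*}$ and $b_{nk}^{\beta(k/n)/\beta^*}=a_n$ from~\eqref{eq:b_nk}, together with the normalization~\eqref{eq:b_n*}, I obtain
\[
\Prob(J_{nk}>b_{nk}x)
=a_n^{-\beta^*}x^{-\beta(k/n)}L\bigl(a_n\,x^{\beta(k/n)/\beta^*}\bigr)
=\frac1n\,x^{-\beta(k/n)}\,\frac{L\bigl(a_n\,x^{\beta(k/n)/\beta^*}\bigr)}{L(a_n)}.
\]
Writing $\Prob(b_{nk}^{-1}J_{nk}\in(c,d])=\Prob(J_{nk}>b_{nk}c)-\Prob(J_{nk}>b_{nk}d)$ and summing over $k$ with $a<k/n\le b$ gives
\[
\E N_n(B)=\frac1n\sum_{a<k/n\le b}\left(c^{-\beta(k/n)}\frac{L(a_n c^{\beta(k/n)/\beta^*})}{L(a_n)}-d^{-\beta(k/n)}\frac{L(a_n d^{\beta(k/n)/\beta^*})}{L(a_n)}\right).
\]
Replacing each $L$-ratio by $1$ turns the right-hand side into a Riemann sum for $\int_a^b\bigl(c^{-\beta(s)}-d^{-\beta(s)}\bigr)\,ds$, which equals $\nu(B)$ by integrating $\beta(s)x^{-\beta(s)-1}$ over $(c,d]$; this matching of the limit with the target intensity is the crux of the identification.

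The hard part will be justifying the replacement of $L(a_n x^{\beta(k/n)/\beta^*})/L(a_n)$ by $1$ \emph{uniformly} in $k$. The essential observation is that on the bounded range $k/n\in(a,b]$ continuity of $\beta$ yields $0<\beta_{\min}\le\beta(k/n)\le\beta_{\max}<1$, so the exponent $\beta(k/n)/\beta^*$ lies in a compact subset of $(0,1]$; consequently, for fixed $x\in\{c,d\}$ the argument $x^{\beta(k/n)/\beta^*}$ stays in a compact subset of $(0,\infty)$. The uniform convergence theorem for slowly varying functions (local uniform convergence on compacta, equivalently Potter's bounds) then gives $\sup_k\bigl|L(a_n x^{\beta(k/n)/\beta^*})/L(a_n)-1\bigr|\to0$, using that $a_n\to\infty$ (forced by $a_n^{\beta^*}/L(a_n)=n\to\infty$). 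This uniform estimate makes the error between $\E N_n(B)$ and the Riemann sum negligible, and the sum itself converges to $\nu(B)$ by continuity of $\beta$. The same bound settles the null-array condition, since
\[
\max_{k\ge1}\Prob\bigl(N_{nk}(B)>0\bigr)\le\max_{a<k/n\le b}\Prob(J_{nk}>b_{nk}c)\le\frac1n\,\Bigl(\sup_{s\in[a,b]}c^{-\beta(s)}\Bigr)(1+o(1))\to0,
\]
the supremum being finite because $\beta$ is bounded away from $0$ and $1$ on $[a,b]$ and $c>0$. Combining both verified conditions with the null-array criterion yields $N_n\Rightarrow\Pi$.
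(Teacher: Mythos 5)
Your proposal is correct. The core analytic content is identical to the paper's: both proofs reduce everything to the computation
\[
\Prob(J_{nk}>b_{nk}x)=\frac1n\,x^{-\beta(k/n)}\,\frac{L\bigl(a_n x^{\beta(k/n)/\beta^*}\bigr)}{L(a_n)},
\]
followed by the uniform convergence theorem for slowly varying functions to replace the $L$-ratio by $1$ uniformly in $k$, and a Riemann-sum argument to identify the limit of $\sum_k n^{-1}\bigl(c^{-\beta(k/n)}-d^{-\beta(k/n)}\bigr)$ with $\nu\bigl((a,b]\times(c,d]\bigr)$; this is exactly the paper's display \eqref{eq:mu_nk2} and the verification that $\nu_n\to\nu$ vaguely. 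Where you differ is the probabilistic wrapper: the paper verifies convergence of Laplace functionals directly (Resnick's Propositions 3.19 and 3.21), which requires the explicit product formula \eqref{eq:psi_xi_n}, the logarithm expansion $\log(1+z)=z(1+\varepsilon(z))$, and the uniform smallness estimate \eqref{eq:sup_to0} to pass from the product to the sum; you instead invoke the Grigelionis--Kallenberg criterion for null arrays of independent point processes, under which the multiplicity condition is trivial (each $N_{nk}$ is a single atom) and the remaining conditions are precisely $\E N_n(B)\to\nu(B)$ plus uniform asymptotic negligibility, both of which you check correctly. Your route buys a shorter argument by delegating the product-to-sum bookkeeping to the cited theorem, at the cost of having to be careful that the criterion is applied on the space $\R_+\times(0,\infty)$ where ``bounded'' means bounded away from $x=0$ and that the rectangles form a dissecting convergence-determining class with $\nu$-null boundaries --- points you do address. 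The paper's route is more self-contained and makes the negligibility mechanism explicit. Either way the theorem is proved; there is no gap in your argument.
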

\begin{proof}
The proof follows the scheme from \cite[Proposition~3.21]{Resnick87}.
According to~\cite[Proposition~3.19]{Resnick87}, it suffices to prove the convergence of Laplace functionals
$\psi_{N_n}(f)\to\psi_{\Pi}(f)$
for arbitrary function $f$ from the family $C^+_0(\R_+\times(0,\infty))$ of continuous, non-negative functions on\linebreak $\R_+\times(0,\infty)$ with compact support.
We have
\[
\psi_{\Pi}(f)=\exp\left\{-\int_{\R_+\times(0,\infty)}\left(1-e^{-f(s,x)}\right)\nu(ds,dx)\right\},
\]
see, for example, \cite[Proposition~3.6]{Resnick87}, and
\begin{equation}\label{eq:psi_xi_n}
\begin{split}
\psi_{N_n}(f)
&=\E\exp\left\{-\sum_k f(kn^{-1},b_{nk}^{-1}J_{nk})\right\}\\
&=\prod_k\left(1-\int_0^\infty\left(1-e^{-f(kn^{-1},b_{nk}^{-1}y)}\right)
    \Prob(J_{nk}\in dy)\right)\\
&=\prod_k\left(1-\int_0^\infty\left(1-e^{-f(kn^{-1},x)}\right)
    \Prob(b_{nk}^{-1}J_{nk}\in dx)\right).
\end{split}
\end{equation}

Consider the measure
$\mu_{nk}(\cdot)= \Prob(b_{nk}^{-1}J_{nk}\in\cdot)$.
We claim that
\begin{equation}\label{eq:mu_nk}
\lim_{n\to\infty}\sup_k\left|n\mu_{nk}([c_1,c_2])-\left(c_1^{-\beta(k/n)}-c_2^{-\beta(k/n)}\right)\right|=0
\end{equation}
for any $[c_1,c_2]\subset(0,\infty)$.
Indeed, using the relations \eqref{eq:J_nk}, \eqref{eq:b_n*} and \eqref{eq:b_nk}, we can write
\begin{equation}\label{eq:mu_nk2}
\begin{split}
n\mu_{nk}([c_1,c_2])
&=n(\Prob(J_{nk}>b_{nk}c_1)-\Prob(J_{nk}>b_{nk}c_2))\\
&=\frac{(b_{nk}c_1)^{-\beta(k/n)}L\left((b_{nk}c_1)^{\frac{\beta(k/n)}{\beta^*}}\right)
    -(b_{nk}c_2)^{-\beta(k/n)}L\left((b_{nk}c_2)^{\frac{\beta(k/n)}{\beta^*}}\right)}%
    {a_n^{-\beta^*}L(a_n)}\\
&=c_1^{-\beta(k/n)}\frac{L\left(a_nc_1^{\frac{\beta(k/n)}{\beta^*}}\right)}{L(a_n)}
    -c_2^{-\beta(k/n)}\frac{L\left(a_nc_2^{\frac{\beta(k/n)}{\beta^*}}\right)}{L(a_n)}.
\end{split}
\end{equation}
It is known (see \citealp[Theorem 1.1]{Seneta76}) that for every fixed $[d_1,d_2]\subset(0,\infty)$ the convergence~\eqref{eq:slow_var} holds uniformly with respect to $\lambda\in[d_1,d_2]$.
Therefore, \eqref{eq:mu_nk} follows from~\eqref{eq:mu_nk2}, since
$a_n\to\infty$ as $n\to\infty$.

Define $\nu_n$ by
\[
\nu_n(ds,dx)=\sum_k\delta_{kn^{-1}}(ds)\mu_{nk}(dx).
\]
Then
\begin{multline*}
\nu_n((u,v]\times(c_1,c_2])\\
=\frac1n\sum_k\delta_{kn^{-1}}((u,v])\left(n\mu_{nk}((c_1,c_2])-\left(c_1^{-\beta(kn^{-1})}-c_2^{-\beta(kn^{-1})}\right)\right)\\
+\int_{c_1}^{c_2}\left(\frac1n\sum_k\delta_{kn^{-1}}((u,v])\beta(kn^{-1})x^{-\beta(kn^{-1})-1}\right)dx
\end{multline*}
The first term in the right-hand side converges to 0 as $n\to\infty$ by~\eqref{eq:mu_nk}.
The second one converges to
$\nu((u,v]\times(c_1,c_2])$,
since the integrand is the Riemann sum for $\int_u^v\beta(s)x^{-\beta(s)-1}ds$.
Hence, $\nu_n$ converges weakly to $\nu$.

Therefore,
\begin{multline}\label{eq:conv_to_loq_psi_Pi}
\sum_k\int_0^\infty\left(1-e^{-f(kn^{-1},x)}\right)\mu_{nk}(dx)
=\int_{\R_+\times(0,\infty)}\left(1-e^{-f}\right)d\nu_{n}\\
\to\int_{\R_+\times(0,\infty)}\left(1-e^{-f(s,x)}\right)\nu(ds,dx)
=-\log\psi_{\Pi}(f)
\end{multline}
as $n\to\infty$.

If $K\subset\R_+\times(0,\infty)$ is the compact support of $f$ and $[c_1,c_2]\subset(0,\infty)$ is its projection on the second coordinate, then
\begin{equation}\label{eq:sup_to0}
\sup_k\int_0^\infty\left(1-e^{-f(kn^{-1},x)}\right)\mu_{nk}(dx)
\le\sup_k\mu_{nk}([c_1,c_2])
\to0 \qquad\text{as }n\to\infty
\end{equation}
by \eqref{eq:mu_nk}.
Equation~\eqref{eq:psi_xi_n} implies
\[
-\log\psi_{N_n}(f)
=-\sum_k\log\left(1-\int_0^\infty\left(1-e^{-f(kn^{-1},x)}\right)\mu_{nk}(dx)\right).
\]
The elementary expansion
\[
\log(1+z)=z(1+\varepsilon(z))
\]
for
$|\varepsilon(z)|\le|z|$
if $|z|\le1/2$ yields
\begin{multline*}
\left|-\log\psi_{N_n}(f)-\sum_k\int_0^\infty\left(1-e^{-f(kn^{-1},x)}\right)\mu_{nk}(dx)\right|\\
\le\sum_k\left(\int_0^\infty\left(1-e^{-f(kn^{-1},x)}\right)\mu_{nk}(dx)\right)^2\\
\le\left(\sup_k\int_0^\infty\left(1-e^{-f(kn^{-1},x)}\right)\mu_{nk}(dx)\right)\\
\times\sum_k\int_0^\infty\left(1-e^{-f(kn^{-1},x)}\right)\mu_{nk}(dx)\to0
\end{multline*}
as $n\to\infty$ by \eqref{eq:conv_to_loq_psi_Pi} and \eqref{eq:sup_to0}.
Together with~\eqref{eq:conv_to_loq_psi_Pi} this implies that
\[
-\log\psi_{N_n}(f)\to-\log\psi_{\Pi}(f)
\qquad\text{as } n\to\infty.
\]
Hence,
$\psi_{N_n}(f)\to\psi_{\Pi}(f)$ as $n\to\infty$.
\end{proof}

\subsection{Convergence of sums to the multistable subordinator}
\begin{theorem}\label{th2}
Assume that
$\beta_*>0$ and $\beta^*<1$.
Then
$\sum_{k\le nt}b_{nk}^{-1}J_{nk}$
weakly converges to $D(t)$ as $n\to\infty$.
\end{theorem}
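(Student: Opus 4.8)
The plan is to realize both the prelimit sum and the limit as images of point processes under a summation functional and then to combine the already-established convergence $N_n\Rightarrow\Pi$ with a truncation argument. Writing $S_n(t)=\sum_{k\le nt}b_{nk}^{-1}J_{nk}=\int_{[0,t]\times(0,\infty)}x\,N_n(ds,dx)$ and $D(t)=\int_{[0,t]\times(0,\infty)}x\,\Pi(ds,dx)$, one cannot apply the continuous mapping theorem directly: since $\beta^*<1$, both $D(t)$ and the individual $J_{nk}$ have infinite mean, and the summation functional fails to be continuous in the vague topology because of the accumulation of small jumps near $x=0$. The standard remedy is to truncate the small jumps at level $\eps>0$, to pass to the limit for the truncated quantities, and then to let $\eps\downarrow0$.

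For fixed $\eps>0$ set $S_n^\eps(t)=\int_{[0,t]\times(\eps,\infty)}x\,N_n(ds,dx)$ and $D^\eps(t)=\int_{[0,t]\times(\eps,\infty)}x\,\Pi(ds,dx)$, and consider the functional $\Phi_\eps(m)=\int_{[0,t]\times(\eps,\infty)}x\,m(ds,dx)$. Working in the one-point compactification of the mark space at $+\infty$, the set $[0,t]\times[\eps,\infty]$ is compact, so $\Phi_\eps$ is finite on configurations with finitely many points there and is continuous (in the vague topology) at every $m$ that charges no mass to the boundary $(\{t\}\times(\eps,\infty))\cup([0,t]\times\{\eps\})$. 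Since $\nu$ is nonatomic and $\nu([0,t]\times(\eps,\infty))<\infty$ (because $x^{-\beta(s)-1}$ is integrable on $(\eps,\infty)$), the process $\Pi$ almost surely avoids this boundary and has only finitely many points in $[0,t]\times(\eps,\infty)$. Hence $\Phi_\eps$ is a.s.\ continuous at $\Pi$, and the continuous mapping theorem applied to $N_n\Rightarrow\Pi$ (as in \cite[Proposition~3.21]{Resnick87}) yields $S_n^\eps(t)=\Phi_\eps(N_n)\Rightarrow\Phi_\eps(\Pi)=D^\eps(t)$ for each fixed $\eps>0$.

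It remains to show that the discarded small jumps are negligible uniformly in $n$. In contrast to $S_n(t)$ itself, the truncated increment $S_n(t)-S_n^\eps(t)=\sum_{k\le nt}b_{nk}^{-1}J_{nk}\mathbf 1\{b_{nk}^{-1}J_{nk}\le\eps\}$ has a finite first moment, so I would apply Markov's inequality and estimate $\sum_{k\le nt}\int_0^\eps x\,\mu_{nk}(dx)$ with $\mu_{nk}(\cdot)=\Prob(b_{nk}^{-1}J_{nk}\in\cdot)$ as in the previous theorem. Writing $\bar F_{nk}(x)=\Prob(b_{nk}^{-1}J_{nk}>x)$ and using $b_{nk}^{-\beta(k/n)}=a_n^{-\beta^*}$ together with $a_n^{-\beta^*}L(a_n)=1/n$, one gets $n\bar F_{nk}(x)=x^{-\beta(k/n)}L(a_nx^{\beta(k/n)/\beta^*})/L(a_n)$; integrating by parts reduces the estimate to $\frac1n\sum_{k\le nt}n\bar F_{nk}(x)$, which by the uniform convergence of the slowly varying function (\citealp[Theorem~1.1]{Seneta76}) behaves like the Riemann sum $\frac1n\sum_{k\le nt}x^{-\beta(k/n)}$, so that $\sum_{k\le nt}\int_0^\eps x\,\mu_{nk}(dx)\to\int_0^t\int_0^\eps x\,\nu(ds,dx)=\int_0^t\frac{\beta(s)}{1-\beta(s)}\eps^{1-\beta(s)}\,ds$, and the right-hand side tends to $0$ as $\eps\downarrow0$. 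This uniform small-jump estimate is the step I expect to be the main obstacle, and it is exactly where both hypotheses enter: $\beta^*<1$ secures integrability of $x^{-\beta(s)}$ at $0$, whereas $\beta_*>0$ confines the exponent $\beta(k/n)/\beta^*$ to $[\beta_*/\beta^*,1]$, keeping $x^{\beta(k/n)/\beta^*}$ in a fixed compact subset of $(0,\infty)$ so that $L(a_nx^{\beta(k/n)/\beta^*})/L(a_n)\to1$ uniformly in $k$. Consequently $\lim_{\eps\downarrow0}\limsup_{n\to\infty}\Prob(S_n(t)-S_n^\eps(t)>\delta)=0$ for every $\delta>0$.

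Finally, $D^\eps(t)\uparrow D(t)$ a.s.\ as $\eps\downarrow0$, so $D^\eps(t)\Rightarrow D(t)$. Combining this with $S_n^\eps(t)\Rightarrow D^\eps(t)$ for each fixed $\eps$ and with the uniform negligibility of $S_n(t)-S_n^\eps(t)$, a standard converging-together argument (see, e.g., Billingsley, Theorem~3.2) gives $S_n(t)\Rightarrow D(t)$, which is the claim.
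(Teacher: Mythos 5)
Your overall architecture --- truncate the marks at level $\eps$, apply the continuous mapping theorem to the restricted summation functional using $N_n\Rightarrow\Pi$, show the discarded small jumps are negligible uniformly in $n$, and finish with a converging-together argument --- is sound, and it is a genuinely different route from the paper's. The paper instead invokes Skorohod's representation to realize $N_n\to\Pi$ almost surely, handles the large jumps ($x\ge r$) by the a.s.\ convergence of the finitely many points in $[0,t]\times[r,\infty)$, and disposes of the small jumps via the pointwise bound $b_{nk}^{-1}J_{nk}\le\bigl(b_{nk}^{-1}J_{nk}\bigr)^{\beta(k/n)/\beta^*}$, valid on $\{b_{nk}^{-1}J_{nk}\le r<1\}$; this reduces the problem to the i.i.d.\ array $J_{nk}^{\beta(k/n)/\beta^*}$ with common tail $t^{-\beta^*}L(t)$, for which the required negligibility is quoted from \cite{DavydovMolchanovZuyev08}. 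Your first two steps (continuity of $\Phi_\eps$ at $\Pi$, and $D^\eps(t)\uparrow D(t)$) are fine and essentially equivalent to the paper's treatment of $\zeta_n(r)$ and $\gamma(r)$.

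The gap is in your small-jump estimate. You write $n\bar F_{nk}(x)=x^{-\beta(k/n)}L\bigl(a_nx^{\beta(k/n)/\beta^*}\bigr)/L(a_n)$ and appeal to the uniform convergence theorem for slowly varying functions, arguing that $\beta_*>0$ keeps $x^{\beta(k/n)/\beta^*}$ in a fixed compact subset of $(0,\infty)$. That is true for each \emph{fixed} $x>0$ as $k$ varies, but the quantity you must control is the integral over $x\in(0,\eps]$, and as $x\downarrow0$ the argument $\lambda=x^{\beta(k/n)/\beta^*}$ leaves every compact subset of $(0,\infty)$; Seneta's theorem gives no uniform control of $L(a_n\lambda)/L(a_n)$ for $\lambda$ near $0$, and this ratio can be large there. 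So the interchange of $\lim_n$ with $\int_0^\eps$ is unjustified as written --- and this happens precisely at $x\approx0$, which is the whole point of the truncation step. The standard repair is Potter's bound: for $\delta<1-\beta^*$ there are $A_\delta,t_0$ with $L(a_n\lambda)/L(a_n)\le A_\delta\lambda^{-\delta}$ whenever $a_n\lambda\ge t_0$, giving the integrable majorant $A_\delta x^{-\beta(k/n)(1+\delta/\beta^*)}\le A_\delta x^{-\beta^*-\delta}$ on most of $(0,\eps]$, while on the residual set $x<(t_0/a_n)^{\beta^*/\beta(k/n)}$ the trivial bound $\bar F_{nk}(x)\le1$ contributes at most $nt\,t_0/a_n=t\,t_0\,a_n^{\beta^*-1}/L(a_n)\to0$. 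Note also that you do not need the exact limit $\int_0^t\int_0^\eps x\,\nu(ds,dx)$: a bound on $\limsup_n\sum_{k\le nt}\int_0^\eps x\,\mu_{nk}(dx)$ that vanishes as $\eps\downarrow0$ suffices for the converging-together argument. With that patch your proof is complete.
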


\begin{proof}
By the Skorohod theorem, it is possible to define $N_n$ and $\Pi$ on the same probability space so that $N_n\to\Pi$ a.\,s\ as $n\to\infty$.
For all $r\in(0,1)$ we have
\begin{multline*}
\left|\sum_{k\le nt}b_{nk}^{-1}J_{nk} - D(t)\right|
\le\left|\sum_{\substack{k\le nt\\b_{nk}^{-1}J_{nk}\le r}}
b_{nk}^{-1}J_{nk}\right|\\
+\left|\sum_{\substack{k\le nt\\b_{nk}^{-1}J_{nk}\ge r}}
b_{nk}^{-1}J_{nk}-\sum_{\substack{t_i\le t, x_i\ge r\\(t_i,x_i)\in\Pi}}x_i\right|
+\left|\sum_{\substack{t_i\le t, x_i\le r\\(t_i,x_i)\in\Pi}}x_i\right|\\
=\eta_n(r)+\zeta_n(r)+\gamma(r).
\end{multline*}

Note that the processes $N_n$ and $\Pi$ have a.\,s.\ at most finite number of points in $[0,t]\times[r,\infty)$ for every $t>0$ and $r>0$.
Therefore the a.\,s.\ convergence $N_n\to\Pi$ implies that the second summand
$\zeta_n(r)\to0$
a.\,s.\ as $n\to\infty$ for all $r>0$.

The third summand
$\gamma(r)=\int_{[0,t]\times(0,r]}x\,d\Pi\to0$
a.\,s.\ as $r\downarrow0$, since the integral
$\int_{[0,t]\times(0,1]}x\,d\Pi$
is a.\,s.\ finite.
Indeed,
$\int_{[0,t]\times(0,1]}x\,\nu(ds,dx)<\infty$,
see~\eqref{eq:exist}.

Consider the first summand.
If $r<1$, then
\begin{align*}
\eta_n(r)&=\sum_{\substack{k\le nt\\b_{nk}^{-1}J_{nk}\le r}}
b_{nk}^{-1}J_{nk}
\le\sum_{\substack{k\le nt\\b_{nk}^{-1}J_{nk}\le r}}
\left(b_{nk}^{-1}J_{nk}\right)^{\beta(kn^{-1})/\beta^*}\\
&=a_n^{-1}\sum_{\substack{k\le nt\\b_{nk}^{-1}J_{nk}\le r}}
J_{nk}^{\beta(kn^{-1})/\beta^*},
\end{align*}
where the last equality follows from~\eqref{eq:b_nk}.
Similarly, we can rewrite the inequality
$b_{nk}^{-1}J_{nk}\le r$
in the following form
\[
a_n^{-1}J_{nk}^{\beta(kn^{-1})/\beta^*}\le r^{\beta(kn^{-1})/\beta^*}.
\]
Then
\begin{equation}\label{eq:bound}
\eta_n(r)\le\sum_{\substack{k\le nt\\a_n^{-1}J_{nk}^{\beta(kn^{-1})/\beta^*}\le r^{\beta_*/\beta^*}}}
a_n^{-1}J_{nk}^{\beta(kn^{-1})/\beta^*}
\end{equation}
for any $r\in(0,1)$.

It follows from~\eqref{eq:J_nk} that
\[
\Prob\left(J_{nk}^{\beta(kn^{-1})/\beta^*}>t\right)=t^{-\beta^*}L(t).
\]
This means that
$\left\{J_{nk}^{\beta(kn^{-1})/\beta^*}, n\ge1, k\ge1\right\}$ is a sequence of i.\,i.\,d.\ random variables with regularly varying tails.
Binomial point processes generated by i.\,i.\,d.\ sequence of random variables with such tails were considered in \cite{DavydovMolchanovZuyev08} and the following property was established: if $\xi_1,\xi_2,\ldots$ are i.i.d. random variables with
$\Prob(\xi_1>t)=t^{-\beta^*}L(t)$
and $\{a_n, n\ge1\}$ is a corresponding sequence of normalizing constants, then the process
$\beta_n=\sum_{k=1}^n\delta_{a_n^{-1}\xi_k}$
satisfies the condition
\[
\limsup_n\Prob\left(\left|\int_{[0,r]}x\beta_n(dx)\right|\ge\varepsilon\right)\to0
\quad\text{as }r\downarrow0
\]
for each $\varepsilon>0$ (see the proof of Theorem~4.6 in \citealp{DavydovMolchanovZuyev08}).
Using this fact, we deduce from~\eqref{eq:bound} that
\[
\limsup_n\Prob(\eta_n(r)\ge\varepsilon)\to0
\quad\text{as }r\downarrow0.
\]

Thus,
\begin{multline*}
\Prob\left(\left|\sum_{k\le nt}b_{nk}^{-1}J_{nk} - D(t)\right|\ge\varepsilon\right)
\le\Prob(\eta_n(r)+\zeta_n(r)+\gamma(r)\ge\varepsilon)\\
\le\Prob(\eta_n(r)\ge\varepsilon/2)
+\Prob(\zeta_n(r)+\gamma(r)\ge\varepsilon/2).
\end{multline*}
This probability can be made arbitrary small by the choice of $n$ and $r$.
\end{proof}

\begin{corollary}\label{cor:conv_fd}
Assume that
$\beta_*>0$ and $\beta^*<1$.
Then the finite-dimensional distributions of
$E_n(r)=\inf\left\{t:\sum_{k\le nt}b_{nk}^{-1}J_{nk}\ge r\right\}$ converge to those of $E(r)$
as $n\to\infty$.
\end{corollary}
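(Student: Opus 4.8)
The plan is to exploit the inverse (first-passage) duality between the sums $S_n(t)=\sum_{k\le nt}b_{nk}^{-1}J_{nk}$ and their inverses $E_n$, combine it with the weak convergence $S_n\Rightarrow D$ furnished by Theorem~\ref{th2}, and transfer distributional statements from $S_n$ to $E_n$ by a Portmanteau argument. The $S_n$ are non-decreasing, right-continuous step functions with $S_n(0)=0$, so for every $r>0$ and $t\ge0$ one has the exact event identity
\begin{equation}\label{eq:dual_n}
\{E_n(r)\le t\}=\{S_n(t)\ge r\}.
\end{equation}
By Proposition~\ref{prop:D-increasing} the limit $D$ is a.s.\ strictly increasing (and right-continuous), whence, together with Corollary~\ref{cor:E-cont}, the analogous identity
\begin{equation}\label{eq:dual}
\{E(r)\le t\}=\{D(t)\ge r\}
\end{equation}
holds for the inverse $E$ as well.

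First I would treat a single level $r>0$. Since $D(t)$ has an absolutely continuous distribution for every $t>0$, the point $r$ is a continuity point of the law of $D(t)$, so Theorem~\ref{th2} gives $\Prob(S_n(t)\ge r)\to\Prob(D(t)\ge r)$ for each fixed $t$. Combining this with \eqref{eq:dual_n} and \eqref{eq:dual} yields $\Prob(E_n(r)\le t)\to\Prob(E(r)\le t)$ for every $t$. Moreover $t\mapsto\Prob(D(t)\ge r)$ is continuous, because $D$ is continuous in probability and $r$ is a continuity point of each $D(t)$; hence $E(r)$ has no atoms and every $t$ is a continuity point of its distribution function, giving $E_n(r)\Rightarrow E(r)$.

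For the finite-dimensional statement fix levels $0<r_1<\dots<r_m$ and times $t_1,\dots,t_m\ge0$. Intersecting \eqref{eq:dual_n} over $j=1,\dots,m$ gives the event identity
\[
\bigcap_{j=1}^m\{E_n(r_j)\le t_j\}=\bigcap_{j=1}^m\{S_n(t_j)\ge r_j\},
\]
and likewise for $E$ and $D$ via \eqref{eq:dual}. Thus it suffices to prove
\begin{equation}\label{eq:joint_goal}
\Prob\Bigl(S_n(t_1)\ge r_1,\dots,S_n(t_m)\ge r_m\Bigr)
\to\Prob\Bigl(D(t_1)\ge r_1,\dots,D(t_m)\ge r_m\Bigr).
\end{equation}
This is where the main work lies. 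I would first upgrade Theorem~\ref{th2} to the joint convergence $(S_n(t_1),\dots,S_n(t_m))\Rightarrow(D(t_1),\dots,D(t_m))$, which follows from the construction in its proof: under the Skorohod coupling $N_n\to\Pi$ a.s., the decomposition into the terms $\eta_n$, $\zeta_n$, $\gamma$ holds simultaneously at the finitely many times $t_i$, and each of the finitely many error terms is controlled exactly as there (a union bound over $i$ suffices), giving joint convergence in probability and hence in distribution. Alternatively, one may argue via the independence of the increments of $D$ together with the independence of the blocks $\sum_{nt_{i-1}<k\le nt_i}b_{nk}^{-1}J_{nk}$.

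Given the joint convergence, \eqref{eq:joint_goal} follows from the Portmanteau theorem applied to the closed orthant $A=\{x\in\R^m:x_j\ge r_j,\ j=1,\dots,m\}$. Its boundary $\partial A$ is contained in $\bigcup_j\{x_j=r_j\}$, which carries no mass under the law of $(D(t_1),\dots,D(t_m))$ because each marginal $D(t_j)$ is absolutely continuous; hence $\Prob((D(t_1),\dots,D(t_m))\in\partial A)=0$ and the probabilities of $A$ converge. Consequently the joint distribution functions of $(E_n(r_1),\dots,E_n(r_m))$ converge to that of $(E(r_1),\dots,E(r_m))$ at every point $(t_1,\dots,t_m)$, establishing the convergence of finite-dimensional distributions. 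The only genuine obstacle is the upgrade to joint convergence of the sums; the passage to the inverses is then routine, precisely because the absolute continuity of $D(t)$ removes all boundary ambiguities.
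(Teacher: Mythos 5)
Your proof is correct and takes essentially the same route as the paper: the first-passage duality $\{E_n(r)\le t\}=\{S_n(t)\ge r\}$ (and its analogue for $D$ and $E$) combined with the convergence of the sums supplied by Theorem~\ref{th2}. You merely make explicit two points the paper's one-display proof leaves implicit --- the upgrade of Theorem~\ref{th2} to joint convergence at finitely many times via the Skorohod coupling, and the absolute continuity of $D(t)$ needed to dispose of the boundary of the orthant in the Portmanteau step --- and both are handled correctly.
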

\begin{proof}
For arbitrary $m$, $r_1,\ldots,r_m$ and $u_1,\ldots,u_m$
\begin{align*}
&\Prob\left(\inf\left\{t:\sum_{k\le nt}b_{nk}^{-1}J_{nk}\ge r_j\right\}\le u_j, j=1,\ldots,m\right)\\
&\quad=\Prob\left(\sum_{k\le nu_j}b_{nk}^{-1}J_{nk}\ge r_j, j=1,\ldots,m\right)
\to\Prob\left(D(u_j)\ge r_j, j=1,\ldots,m\right)\\
&\quad=\Prob\left(\inf\left\{t:D(t)\ge r_j\right\}\le u_j, j=1,\ldots,m\right)\\
&\quad=\Prob\left(E(r_j)\le u_j, j=1,\ldots,m\right)
\end{align*}
as $n\to\infty$ by Theorem~\ref{th2}.
\end{proof}

\begin{corollary}\label{cor:conv_J1}
Assume that
$\beta_*>0$ and $\beta^*<1$.
Then $E_n$ weakly converges to $E$ in $D(\R_+,\R_+)$ as $n\to\infty$ in the $J_1$ topology.
\end{corollary}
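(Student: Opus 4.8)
The plan is to upgrade the finite-dimensional convergence of Corollary~\ref{cor:conv_fd} to functional $J_1$-convergence by exploiting two structural features. First, each $E_n$ is non-decreasing, being the first-passage time of the non-decreasing step function $\sum_{k\le n\,\cdot}b_{nk}^{-1}J_{nk}$. Second, the limit $E$ has a.s.\ continuous and non-decreasing paths by Corollary~\ref{cor:E-cont}. For monotone functions converging to a continuous monotone limit along a dense set, pointwise convergence automatically improves to local uniform convergence, and uniform convergence on compacts is stronger than $J_1$-convergence; this is the mechanism I would use.

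First I would fix a countable dense set $Q\subset\R_+$ containing $0$. Corollary~\ref{cor:conv_fd} gives convergence of the finite-dimensional distributions of $E_n$ to those of $E$, which is precisely convergence in distribution in the Polish space $\R^Q$ equipped with the product topology (finite-dimensional projections form a convergence-determining class). Applying the Skorohod representation theorem, I would realise versions $\tilde E_n$ and $\tilde E$ on a common probability space for which $\tilde E_n(r)\to\tilde E(r)$ almost surely, simultaneously for every $r\in Q$. By Corollary~\ref{cor:E-cont} the paths of $\tilde E$ are a.s.\ continuous and non-decreasing, while each $\tilde E_n$ is non-decreasing.

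The core step is the deterministic lemma: if the $g_n$ are non-decreasing, $g$ is continuous and non-decreasing on $[0,T]$, and $g_n(r)\to g(r)$ for all $r$ in a dense subset $Q$, then $\sup_{0\le r\le T}|g_n(r)-g(r)|\to0$. I would prove this by a P\'olya-type argument: using uniform continuity of $g$, choose points $0=q_0<q_1<\dots<q_m$ in $Q$, with $q_m$ slightly exceeding $T$, so that $g(q_{i+1})-g(q_i)<\eps$; monotonicity of $g_n$ then sandwiches $g_n(r)$ between $g_n(q_i)$ and $g_n(q_{i+1})$ on each subinterval, and since $g_n(q_i)\to g(q_i)$ at the finitely many grid points, one obtains $\sup_{[0,T]}|g_n-g|\le 2\eps$ for all large $n$. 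Applying this lemma pathwise to $\tilde E_n$ and $\tilde E$ on each $[0,T]$ yields $\tilde E_n\to\tilde E$ uniformly on compacts, almost surely.

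Finally, since the $\tilde E_n$ converge uniformly on every compact interval to the continuous limit $\tilde E$, taking the identity time change shows they converge in the $J_1$ topology; hence $\tilde E_n\to\tilde E$ in $D(\R_+,\R_+)$ almost surely, and transferring back gives $E_n\Rightarrow E$ in $J_1$. The main obstacle I anticipate is purely the technical bookkeeping in the deterministic lemma—handling the right endpoint $T$ (where the value of $g$ need not be attained along $Q$) and ensuring the grid can be taken inside $Q$—together with verifying that fdd-convergence is genuinely convergence in distribution on $\R^Q$ so that Skorohod's theorem applies. Once these points are in place, the monotonicity-plus-continuity structure makes the substantive content routine.
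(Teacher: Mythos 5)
Your proposal is correct, but it takes a more self-contained route than the paper, which disposes of the corollary in one line by citing Bingham's theorem on weak convergence of monotone processes (\citealp[Theorem~3]{Bingham71}): for processes with non-decreasing sample paths, convergence of finite-dimensional distributions to a limit that is merely \emph{stochastically} continuous already implies weak convergence in $D$. Your argument essentially reproves that theorem from scratch --- Skorohod representation on $\R^Q$ for a countable dense $Q$, then the P\'olya/Dini dichotomy for monotone functions converging pointwise on a dense set to a continuous monotone limit, then the fact that locally uniform convergence implies $J_1$ convergence. Note that you use the stronger input of a.s.\ path continuity of $E$ (Corollary~\ref{cor:E-cont}), whereas Bingham's result only needs continuity in probability; both are available here, so nothing is lost. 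Two points of bookkeeping beyond those you already flag deserve explicit mention: (i) the Skorohod-represented versions $\tilde E_n$ live on $\R^Q$, so monotonicity of $\tilde E_n$ on $Q$ must be argued as a distributional property (it is, since it is determined by countably many coordinates and $E_n$ is non-decreasing), and one must pass from uniform convergence on $Q\cap[0,T]$ to convergence of the $D$-valued regularizations --- harmless because the limit is continuous, but it is the step that makes the identification of the $\R^Q$-law with the $D$-law legitimate; (ii) Corollary~\ref{cor:conv_fd} as stated gives convergence of the joint ``distribution functions'' $\Prob(E_n(r_j)\le u_j,\,j\le m)$, which yields fdd convergence at continuity points of the limit law --- exactly what your $\R^Q$ argument needs, since a countable dense $Q$ of such points can always be chosen. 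With these details filled in, your proof is a valid and instructive elementary substitute for the citation.
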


\begin{proof}
Since $E_n$ has non-decreasing sample paths and $E$ is continuous in probability by Corollary~\ref{cor:E-cont},
the convergence under $J_1$ follows from Corollary~\ref{cor:conv_fd} and \cite[Theorem 3]{Bingham71}.
\end{proof}

\subsection{Convergence of continuous-time random walks to the multifractional Poisson process}
Let $Y_i^{(p)}$, $i\ge1$, be i.\,i.\,d.\ random variables, independent of $J_{nk}$, $n\ge1$, $k\ge1$,
such that
\[
\Prob\left(Y_i^{(p)}=1\right)=p, \quad
\Prob\left(Y_i^{(p)}=0\right)=1-p.
\]
Denote
$S^{(p)}(t)=\sum_{i\le t}Y_i^{(p)}$.

\begin{theorem}
Assume that $\beta_*>0$ and $\beta^*<1$.
If $p_n\downarrow0$ as $n\to\infty$,
then the continuous-time random walk
$S^{(p_n)}(\lambda E_n(t)/p_n)$
converges to the multifractional Poisson process $N(E(t))$ as $n\to\infty$ in the $M_1$ topology on $D(\R_+,\R)$.
\end{theorem}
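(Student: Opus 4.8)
The plan is to express the random walk as a composition and pass to the limit in the outer counting process and the inner time change separately, then combine them by a continuous-mapping argument. Put $M_n(s)=S^{(p_n)}(\lambda s/p_n)$, so that the process of interest is $(M_n\circ E_n)(t)=M_n(E_n(t))$ and the asserted limit is $(N\circ E)(t)=N(E(t))$. Accordingly I would establish: (i) $M_n\Rightarrow N$ in $(D(\R_+,\R),J_1)$; (ii) $E_n\Rightarrow E$ with a continuous limit; (iii) joint convergence of $(M_n,E_n)$; and (iv) continuity of the composition map at the limit, which delivers the conclusion via the continuous-mapping theorem.

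For (i), note that $M_n(s)$ is a sum of $\lfloor\lambda s/p_n\rfloor$ independent Bernoulli$(p_n)$ variables, so $\E M_n(s)\to\lambda s$ and the successes form a rare-event point process. By the functional law of rare events --- equivalently, the convergence of the point process of success epochs to a rate-$\lambda$ Poisson process, in the spirit of \cite{Resnick87} --- the process $M_n$ converges weakly to the homogeneous Poisson process $N$ of intensity $\lambda$ in the $J_1$ topology. For (ii), Corollary~\ref{cor:conv_J1} gives $E_n\Rightarrow E$ in $J_1$, and since $E$ is a.s.\ continuous by Corollary~\ref{cor:E-cont}, the convergence is in fact uniform on compact sets. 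For (iii), the independence of $\{Y_i^{(p_n)}\}$ and $\{J_{nk}\}$ makes $M_n$ and $E_n$ independent; combined with the marginal limits and the independence of $N$ and $\Pi$ (hence of $N$ and $E$), this yields $(M_n,E_n)\Rightarrow(N,E)$ in the product Skorokhod space, with independent coordinates.

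It remains to carry out (iv). The structural facts I would use are that $E$ is continuous (it has no jumps) and strictly increasing off the at most countably many flat stretches, which sit exactly at the jump times of $D$ by Proposition~\ref{prop:D-increasing}. The discontinuities of $N$ lie at its arrival epochs, which are a.s.\ disjoint from the jump times of $D$ because $N$ is independent of $\Pi$. Hence $E$ traverses every arrival level of $N$ at a point of strict increase, so no jump of $N$ is absorbed into a flat stretch of $E$, and each jump of $N\circ E$ is produced cleanly by the time change. Under precisely these conditions the composition map $(x,y)\mapsto x\circ y$ is continuous at $(N,E)$ for the $M_1$ topology on the range (Whitt, \emph{Stochastic-Process Limits}, 2002), and the continuous-mapping theorem gives $M_n\circ E_n\Rightarrow N\circ E=N(E(\cdot))$ in $(D(\R_+,\R),M_1)$.

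The main obstacle is step (iv), and it dictates the use of $M_1$ rather than $J_1$. Although the inner limit $E$ is continuous, both approximants $M_n$ and $E_n$ carry jumps, and composition of jump processes is not $J_1$-continuous. The typical failure is that a single (small) jump of the time change $E_n$ may straddle several neighbouring arrival levels of $M_n$, thereby merging the corresponding unit jumps of $M_n\circ E_n$ into one multiple jump, whereas the limit $N\circ E$ keeps these as distinct nearby unit jumps. Such a merger is admissible in $M_1$, where two nearby jumps can be matched by a single monotone increment along the completed graph, but is fatal in $J_1$. The crux of a rigorous proof is therefore the verification of the composition theorem's hypotheses, above all the a.s.\ disjointness of $\mathrm{Disc}(N)$ from the flat-level set of $E$; once joint convergence and this disjointness are secured, the remaining estimates are routine.
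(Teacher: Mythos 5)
Your proposal follows the same architecture as the paper's proof: marginal convergence $S^{(p_n)}(\lambda t/p_n)\Rightarrow N$ in $J_1$, joint convergence with $E_n\Rightarrow E$ via independence, and then Whitt's composition theorem (Theorem 13.2.4 of \emph{Stochastic-Process Limits}), whose key hypothesis you correctly identify as the requirement that $E$ be strictly increasing at every $r$ with $N(E(r)-)\ne N(E(r))$, i.e.\ that $N$ and $D$ have no simultaneous jumps. The one place where you genuinely diverge is the verification of this no-simultaneous-jumps condition. The paper proves it quantitatively: it discretizes $[0,1]$ into $j$ subintervals, uses independence of $N$ and $\Pi$ together with the stochastic-continuity estimate $\Prob(D(t+h)-D(t)>\eps)\le C_\eps h$ from Proposition~2.2 and stationarity of the increments of $N$ to bound the probability of a common jump by $C_\eps\Prob(N(1/j)>\eps)\to0$. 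You instead assert disjointness of the jump sets ``because $N$ is independent of $\Pi$.'' That conclusion is true, but independence alone is not an argument --- two independent random dense sets need not be a.s.\ disjoint. To close it you should add: condition on $\Pi$, so that the jump times of $D$ form a fixed countable set; the arrival epochs of $N$ are countably many random variables with absolutely continuous laws, hence a.s.\ avoid any fixed countable set; integrate over $\Pi$. With that one line supplied, your conditioning/countability route is actually shorter than the paper's discretization and does not need the estimate~\eqref{eq:up_bound}, whereas the paper's argument has the advantage of reusing machinery it has already built and of working directly with jumps of size exceeding $\eps$. Everything else in your write-up (the rare-events limit for the Bernoulli walk, local uniform convergence of $E_n$ to the continuous $E$, the explanation of why $M_1$ rather than $J_1$ is the right topology for the composition) matches the paper or is a correct elaboration of it.
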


\begin{proof}
It was established in the proof of~\cite[Theorem 2.5]{MeerschaertNaneVellaisamy11} that
$S^{(p)}(\lambda t/p)$ weakly converges to $N(t)$
as $p\to0$ in the $J_1$ topology.
By Corollary~\ref{cor:conv_J1} and~\cite[Theorem 3.2]{Billingsley68},
$\left(S^{(p_n)}(\lambda t/p_n),E_n(t)\right)$
weakly converges to $(N(t),E(t))$ as $n\to\infty$ in the $J_1$ topology of the product space $D(\R_+,\R\times\R)$.
In order to prove the convergence of the compositions in the $M_1$ topology, we will apply~\cite[Theorem 13.2.4]{Whitt02}.
Taking into account the continuity of $E$ (Corollary \ref{cor:E-cont}), it is sufficient to check that $t=E(r)$ is (almost surely) strictly increasing at $r$ whenever
$N(t-)\ne N(t)$.
It is easy to see that this condition holds if and only if the processes $D$ and $N$ have (almost surely) no simultaneous jumps.
Thus, we need to verify that the probability that both $D(t)-D(t-)$ and $N(t)-N(t-)$ exceed $\eps$ for some $t\ge0$
is equal to zero for arbitrary $\eps>0$.
Obviously, it is sufficient to prove this for a finite interval.
For the sake of simplicity, consider the interval $[0,1]$.
Then
\begin{multline*}
\Prob(D(t)-D(t-)>\eps,N(t)-N(t-)>\eps \text{ for some } t\in[0,1])\\
\le\Prob\left(\bigcap_{j=1}^\infty A_{2^j}\right)
=\lim_{j\to\infty}\Prob(A_{2^j}),
\end{multline*}
where
$
A_j=\bigcup_{k=0}^{j-1}\left\{D\left(\frac{k+1}{j}\right)-D\left(\frac{k}{j}\right)>\eps, N\left(\frac{k+1}{j}\right)-N\left(\frac{k}{j}\right)>\eps\right\}.
$
Using the independence of processes, the estimate~\eqref{eq:up_bound} for $D$ and the stationarity of the increments for $N$, we obtain
\begin{align*}
\Prob(A_j)&\le\sum_{k=0}^{j-1}\Prob\left(D\left(\tfrac{k+1}{j}\right)-D\left(\tfrac{k}{j}\right)>\eps\right) \Prob\left(N\left(\tfrac{k+1}{j}\right)-N\left(\tfrac{k}{j}\right)>\eps\right)\\
&\le\frac{C_\eps}{j}\sum_{k=0}^{j-1}\Prob\left(N\left(\tfrac{k+1}{j}\right)-N\left(\tfrac{k}{j}\right)>\eps\right)
=C_\eps\Prob(N(1/j)>\eps).
\end{align*}
Since the Poisson process $N$ is continuous in probability, we have
$\Prob(A_j)\to0$ as $j\to\infty$.
This completes the proof.
\end{proof}

\section*{Acknowledgements}
IM supported in part by Swiss National Science Foundation grant 200021-137527.
KR supported by the Swiss Government Excellence Scholarship.

The authors are grateful to the anonymous referees for very careful reading of this paper and suggesting a number of improvements.

\bibliographystyle{elsarticle-harv}
\bibliography{ctrw}

\begin{thebibliography}{20}
\expandafter\ifx\csname natexlab\endcsname\relax\def\natexlab#1{#1}\fi
\expandafter\ifx\csname url\endcsname\relax
  \def\url#1{\texttt{#1}}\fi
\expandafter\ifx\csname urlprefix\endcsname\relax\def\urlprefix{URL }\fi

\bibitem[{{Beghin} and {Orsingher}(2009)}]{Beghin_Orsingher09}
{Beghin}, L., {Orsingher}, E., 2009. {Fractional Poisson processes and related
  planar random motions.} {Electron. J. Probab.} 14, 1790--1826.

\bibitem[{Billingsley(1968)}]{Billingsley68}
Billingsley, P., 1968. {Convergence of probability measures.} John Wiley \&
  Sons, New York.

\bibitem[{{Bingham}(1971)}]{Bingham71}
{Bingham}, N.~H., 1971. {Limit theorems for occupation times of Markov
  processes.} {Z. Wahrscheinlichkeitstheor. Verw. Geb.} 17, 1--22.

\bibitem[{Davydov et~al.(2008)Davydov, Molchanov, and
  Zuyev}]{DavydovMolchanovZuyev08}
Davydov, Y., Molchanov, I., Zuyev, S., 2008. {Strictly stable distributions on
  convex cones.} {Electron. J. Probab.} 13, 259--321.

\bibitem[{Falconer and Liu(2012)}]{FalconerLiu12}
Falconer, K., Liu, L., 2012. Multistable processes and localizability. Stoch.
  Models 28~(3), 503--526.

\bibitem[{{Falconer} et~al.(2009){Falconer}, {Le Gu\'evel}, and {L\'evy
  V\'ehel}}]{Falconer_LeGuevel_LevyVehel_2009}
{Falconer}, K.~J., {Le Gu\'evel}, R., {L\'evy V\'ehel}, J., 2009. {Localizable
  moving average symmetric stable and multistable processes.} {Stoch. Models}
  25~(4), 648--672.

\bibitem[{{Falconer} and {L\'evy V\'ehel}(2009)}]{Falconer_LevyVehel_2009}
{Falconer}, K.~J., {L\'evy V\'ehel}, J., 2009. {Multifractional, multistable,
  and other processes with prescribed local form.} {J. Theor. Probab.} 22~(2),
  375--401.

\bibitem[{Fisz and Varadarajan(1962/1963)}]{FiszVaradarajan}
Fisz, M., Varadarajan, V.~S., 1962/1963. A condition for absolute continuity of
  infinitely divisible distribution functions. Z. Wahrscheinlichkeitstheorie
  und Verw. Gebiete 1, 335--339.

\bibitem[{Kingman(1993)}]{Kingman93}
Kingman, J. F.~C., 1993. {Poisson processes.} Oxford: Clarendon Press.

\bibitem[{{Laskin}(2003)}]{Laskin03}
{Laskin}, N., 2003. {Fractional Poisson process.} {Commun. Nonlinear Sci.
  Numer. Simul.} 8~(3-4), 201--213.

\bibitem[{{Le Gu\'evel} and {L\'evy V\'ehel}(2012)}]{LeGuevel_LevyVehel_2012}
{Le Gu\'evel}, R., {L\'evy V\'ehel}, J., 2012. {A Ferguson-Klass-LePage series
  representation of multistable multifractional motions and related processes.}
  {Bernoulli} 18~(4), 1099--1127.

\bibitem[{{Le Gu\'evel} et~al.(2013){Le Gu\'evel}, {L\'evy V\'ehel}, and
  Liu}]{LeGuevelLevyVehelLiu13}
{Le Gu\'evel}, R., {L\'evy V\'ehel}, J., Liu, L., 2013. On two multistable
  extensions of stable {L}\'evy motion and their semi-martingale
  representations. J. Theor. Probab.
\newline\urlprefix\url{http://dx.doi.org/10.1007/s10959-013-0528-6}

\bibitem[{{Mainardi} et~al.(2004){Mainardi}, {Gorenflo}, and
  {Scalas}}]{Mainardi_Gorenflo_Scalas04}
{Mainardi}, F., {Gorenflo}, R., {Scalas}, E., 2004. {A fractional
  generalization of the Poisson processes.} {Vietnam J. Math.} 32, 53--64.

\bibitem[{Meerschaert et~al.(2011)Meerschaert, Nane, and
  Vellaisamy}]{MeerschaertNaneVellaisamy11}
Meerschaert, M.~M., Nane, E., Vellaisamy, P., 2011. {The fractional Poisson
  process and the inverse stable subordinator.} {Electron. J. Probab.} 16,
  1600--1620.

\bibitem[{Meerschaert and Scheffler(2004)}]{MeerschaertScheffler04}
Meerschaert, M.~M., Scheffler, H.-P., 2004. {Limit theorems for continuous-time
  random walks with infinite mean waiting times.} {J. Appl. Prob.} 41,
  623--638.

\bibitem[{Repin and Saichev(2000)}]{RepinSaichev_2000}
Repin, O., Saichev, A., 2000. Fractional {P}oisson law. Radiophysics and
  Quantum Electronics 43~(9), 738--741.

\bibitem[{Resnick(1987)}]{Resnick87}
Resnick, S.~I., 1987. {Extreme values, regular variation, and point processes.}
  Applied Probability, Vol. 4, New York etc.: Springer-Verlag.

\bibitem[{Seneta(1976)}]{Seneta76}
Seneta, E., 1976. {Regularly varying functions.} {Lecture Notes in Mathematics.
  508. Berlin-Heidelberg-New York: Springer-Verlag}.

\bibitem[{{Uchaikin} et~al.(2008){Uchaikin}, {Cahoy}, and
  {Sibatov}}]{Uchaikin_Cahoy_Sibatov08}
{Uchaikin}, V., {Cahoy}, D., {Sibatov}, R., 2008. {Fractional processes: from
  Poisson to branching one.} {Int. J. Bifurcation Chaos Appl. Sci. Eng.}
  18~(9), 2717--2725.

\bibitem[{{Whitt}(2002)}]{Whitt02}
{Whitt}, W., 2002. {Stochastic-process limits. An introduction to
  stochastic-process limits and their application to queues.} New York, NY:
  Springer.

\end{thebibliography}

\end{document}